\documentclass[12pt,oneside, usenames,dvipsnames]{amsart}

\usepackage[utf8]{inputenc}
\usepackage[margin=0.9in]{geometry}
\usepackage{amssymb,amsthm,color,hyperref}
\usepackage{array}
\usepackage{tikz}
\usetikzlibrary{cd}
\usetikzlibrary{decorations.markings}
\usepackage{pgfplots}
  \pgfplotsset{compat=1.17}
\usepackage{mathtools}
\usepackage{xcolor}

  \newtheorem{theorem}{Theorem}
  \newtheorem{lemma}[theorem]{Lemma}

\renewcommand{\le}{\leqslant}

\newcommand{\psin}{\mathsf{PSI}}
\newcommand{\copsi}{\mathsf{CoPSI}}

\title{What is the maximal connected partial symmetry index of a connected graph of a given size?}

\author{Z. Janelidze, F. van Niekerk and J. Viljoen}

\address{Department of Mathematical Sciences\\ Stellenbosch University, South Africa\\
and\\
National Institute for Theoretical and Computational Sciences (NITheCS), South Africa}
\email{zurab@sun.ac.za}

\address{Department of Mathematical Sciences\\ Stellenbosch University, South Africa\\
and\\
National Institute for Theoretical and Computational Sciences (NITheCS), South Africa}
\email{fkvn@sun.ac.za}

\address{Reddam House Sydney, 68/70 Edgecliff Road, Woollahra, NSW, 2025, Australia}
\email{jade.villz@gmail.com}

\pagestyle{plain}
\footskip=30pt

\begin{document}

\maketitle

\section*{Abstract} For a given graph, by its \emph{connected partial symmetry index} we mean the number of all isomorphisms between connected induced subgraphs of the graph. In this brief note we answer the question in the title.

\section*{Introduction}

Consider a graph $G=(G,E)$, i.e., a set $G$ equipped with a set $E$ of two-element subsets of $G$. By a \emph{partial symmetry} in $G$ we mean a triple $(U,V,f)$, where $U$ and $V$ are induced subgraphs of $G$ and $f$ is an isomorphism $f\colon X\to Y$. When one of $U$ or $V$ is connected, then both are and we call $(U,V,f)$ a \emph{connected partial symmetry}. We are interested in the number of connected partial symmetries in a graph: call it the \emph{connected partial symmetry index} of the graph, $\copsi(G)$. For a graph with $n$ vertices (i.e., $|G|=n$), the largest attainable connected partial symmetry index is clearly when $E$ is largest, that is, when the graph is complete, $G\approx\mathsf{K}_n$. This is because in $\mathsf{K}_n$, every non-empty partial symmetry is a connected partial symmetry. Moreover, the largest connected partial symmetry index is attained only by the complete graphs for a given order $n$ of the graph: if there is at least one pair $x\neq y$ of vertices in a graph that are not connected by an edge, then there will be no connected partial symmetry $(U,V,f)$ with $U=\{x,y\}$ (since such $U$ is not connected). In this paper we show that a star with $n$ rays (the bipartite graph $\mathsf{K}_{1,n}$) plays a similar role as the complete graph if instead of the order of the graph we restrict graph's size (i.e., the number of edges), and consider only connected graphs. Note that if we allow disconnected graphs, then the connected partial symmetry index is unbounded since we can always create new connected partial symmetries without increasing the size of the graph, by adding vertices to the graph of degree $0$.

\begin{theorem}\label{ThmC}
  Let $G$ be a connected graph of size $n$. We have
  \[\copsi(G)\le\copsi(\mathsf{K}_{1,n})\]
  and furthermore, if $\copsi(G)=\copsi(\mathsf{K}_{1,n})$ then $G$ is isomorphic to $\mathsf{K}_{1,n}$.
\end{theorem}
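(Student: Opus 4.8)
The plan is to rewrite $\copsi(G)$ as a sum over isomorphism types and compare it to the star type by type. For a connected graph $T$ let $N_T(G)$ be the number of induced subgraphs of $G$ isomorphic to $T$. A connected partial symmetry $(U,V,f)$ with $U\cong T$ amounts to a choice of $U$ among the $N_T(G)$ copies of $T$, of $V$ among the same $N_T(G)$ copies, and of an isomorphism $f\colon U\to V$, of which there are $|\mathrm{Aut}(T)|$; hence
\[\copsi(G)=\sum_{T}N_T(G)^2\,|\mathrm{Aut}(T)|.\]
I would split off the two smallest types. The type $\mathsf{K}_1$ contributes $|G|^2$, and since a connected graph of size $n$ has at most $n+1$ vertices this is at most $(n+1)^2=|\mathsf{K}_{1,n}|^2$. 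The single-edge type $\mathsf{K}_2$ contributes $2n^2$ for \emph{every} graph of size $n$, because $N_{\mathsf{K}_2}(G)$ is exactly the number of edges. So the theorem reduces to the bulk term $S(G)=\sum_{|E(T)|\ge2}N_T(G)^2|\mathrm{Aut}(T)|$, for which I must show $S(G)\le S(\mathsf{K}_{1,n})$.

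Two elementary lemmas drive the bound on $S$. First, for a connected $T$ with $k\ge2$ edges I claim $\mathrm{Aut}(T)$ acts \emph{faithfully} on $E(T)$, so that $|\mathrm{Aut}(T)|\le k!$. To see faithfulness, pick a vertex $v$ of degree at least $2$ (one exists once $k\ge2$); an automorphism fixing every edge must fix $v$ and all its neighbours, and this then propagates along the connected graph. This is exactly the place where the single-edge case has to be excluded, since $|\mathrm{Aut}(\mathsf{K}_2)|=2>1!$. Second, writing $c_k(G)$ for the number of connected induced subgraphs of $G$ with exactly $k$ edges, the assignment $W\mapsto E(G[W])$ is injective into the $k$-element subsets of $E(G)$: a connected induced subgraph with an edge has no isolated vertex, so its vertex set $W$ is recovered as the set of endpoints of its edges. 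Hence $c_k(G)\le\binom{n}{k}$.

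Combining these, and using $\sum_i a_i^2\le(\sum_i a_i)^2$ for nonnegative reals together with $\sum_{|E(T)|=k}N_T(G)=c_k(G)$, I obtain
\[S(G)\le\sum_{k\ge2}k!\sum_{|E(T)|=k}N_T(G)^2\le\sum_{k\ge2}k!\,c_k(G)^2\le\sum_{k\ge2}k!\binom{n}{k}^2.\]
A direct count identifies the last sum as $S(\mathsf{K}_{1,n})$: in the star every connected induced subgraph with $k$ edges is a sub-star $\mathsf{K}_{1,k}$, there are $\binom{n}{k}$ of them, and each has $k!$ automorphisms. This proves the inequality.

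For the equality case I would trace back through the chain: $\copsi(G)=\copsi(\mathsf{K}_{1,n})$ forces $c_k(G)=\binom{n}{k}$ for all $k\ge2$, and I expect the decisive instance to be $k=2$. There $c_2(G)=\binom{n}{2}$ says that the injection $W\mapsto E(G[W])$ is onto, i.e.\ \emph{every} pair of edges is the edge set of an induced path on three vertices; equivalently any two edges share a vertex and span no triangle. The main obstacle is this last deduction, which I would settle with the elementary fact that a pairwise-intersecting family of two-element sets either has a common element or is contained in a triangle: triangle-freeness rules out the second alternative, so all edges pass through a single vertex, and connectivity then forces $G\cong\mathsf{K}_{1,n}$.
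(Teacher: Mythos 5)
Your proof is correct, and it splits naturally into a half that mirrors the paper and a half that is genuinely different. For the inequality, both arguments rest on exactly the same two facts: a connected induced subgraph with at least one edge is recovered from its edge set, and an isomorphism between connected graphs with at least two edges is determined by its action on edges --- your faithfulness claim is the automorphism case of the paper's Lemma~\ref{LemA}, proved by the same fix-and-propagate argument. The difference is only bookkeeping: the paper maps each symmetry of the third type injectively to a partial bijection of $E(G)$ with domain of size at least two, getting the bound $\sum_{k\ge 2}\binom{n}{k}^2k!$ in one stroke, whereas you sum over isomorphism types and interpolate using $|\mathrm{Aut}(T)|\le k!$, then $\sum_T N_T(G)^2\le\bigl(\sum_T N_T(G)\bigr)^2$, then $c_k(G)\le\binom{n}{k}$; the two computations are equivalent. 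Where you truly depart from the paper is the equality case. The paper shows $G$ has order $n+1$, hence is a tree, and then runs a case analysis on vertex degrees which, unless $G$ is a star, produces an induced path of length three whose edge set does not admit all $3!$ permutations. You instead use only the $k=2$ instance of equality, $c_2(G)=\binom{n}{2}$: every pair of edges spans an induced path, so the edges pairwise intersect and $G$ is triangle-free, and the Helly-type fact for $2$-sets yields a common vertex. This buys you three things: no tree theory, no case analysis (you never even need the order-$(n+1)$ step), and robustness --- the paper's parenthetical justification for the path of length three is in fact garbled (fixing the middle edge while swapping the outer two \emph{is} realized, namely by the flip automorphism; the permutations genuinely lost are, e.g., the $3$-cycles), whereas your $k=2$ argument has no such delicate claim to get right. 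Two trivial points to patch in a final write-up: ``contained in a triangle'' contradicts triangle-freeness only when $n\ge 3$, though for $n\le 2$ a common vertex exists anyway since any two intersecting $2$-sets meet; and the degenerate cases $n\le 1$ are immediate.
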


The question in the title is then answered by establishing a formula for the connected partial symmetry index of a star with $n$ rays, which we find to be:
$$\copsi(\mathsf{K}_{1,n})=2n(n+1)+\sum_{i=0}^{n}\binom{n}{i}^2i!$$
Interestingly, the sequence $\copsi(\mathsf{K}_{1,n})$ does not appear in the On-Line Encyclopedia of Integer Sequences (OEIS). Below are illustrations of $\mathsf{K}_{1,n}$ when $n=1,3,9$.
$$ \scalebox{0.8}{\begin{tikzpicture}
  \node[draw, circle, minimum size=30pt]
  (0) at (0,0){$0$};
  \foreach \i in {1,...,1} {
    \node[draw, circle, minimum size=30pt] (\i) at ({2*cos(360/1*(\i-1))},{2*sin(360/1*(\i-1))}){$\i$};
    \draw (0) -- (\i);
  }
  \node[draw, circle, minimum size=30pt]
  (0) at (7,0){$0$};
  \foreach \i in {1,...,3} {
    \node[draw, circle, minimum size=30pt] (\i) at ({7+2*cos(360/3*(\i-1))},{2*sin(360/3*(\i-1))}){$\i$};
    \draw (0) -- (\i);
  }
    \node[draw, circle, minimum size=30pt]
  (0) at (14,0){$0$};
  \foreach \i in {1,...,9} {
    \node[draw, circle, minimum size=30pt] (\i) at ({14+2*cos(360/9*(\i-1))},{2*sin(360/9*(\i-1))}){$\i$};
    \draw (0) -- (\i);
  }
\end{tikzpicture}}$$

\section*{Proof of Theorem~\ref{ThmC}}

There are three types of connected partial symmetries $(U,V,f)$:
  \begin{itemize}
  \item \emph{singleton symmetries}, where $|U|=|V|=1$,
  \item \emph{edge symmetries}, where $U$ and $V$ are edges,
  \item the remaining connected partial symmetries, where the subgraphs induced by $U$ and $V$ each have size greater or equal to $2$.
  \end{itemize}
  For any graph $G$, the number of singleton symmetries is always $m^2$, where $m$ is the order of the graph. $\mathsf{K}_{1,n}$ has order $n+1$. A graph of size $n$ cannot have higher order (the size of a connected graph of order $m$ is at least $m-1$). So a graph whose size is $n$ always has less or the same number of singleton symmetries as $\mathsf{K}_{1,n}$ does. The same is true for edge symmetries. In fact, any graph of size $n$ has exactly $2n^2$ many edge symmetries: each edge can be partial symmetryed to any other partial symmetry in exactly two ways.

  To analyze partial symmetries of the third kind, we need the following.

  \begin{lemma}\label{LemA}
If two isomorphisms $u$ and $v$ from a graph $G$ to a graph $H$ induce the same bijection of edge sets, then $u=v$ provided $G$ is connected and has size different from $1$.     
\end{lemma}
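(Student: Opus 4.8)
The plan is to reduce the statement to a claim purely about automorphisms of $G$. Writing $w = v^{-1}\circ u$, the hypothesis that $u$ and $v$ induce the same bijection of edge sets says exactly that, for every edge $\{a,b\}$ of $G$, applying $v^{-1}$ to the common image yields $\{w(a),w(b)\}=\{a,b\}$. Thus $w$ is an automorphism of $G$ that fixes every edge as a set, and proving $u=v$ is equivalent to proving that $w$ is the identity. If the size of $G$ is $0$, then $G$ is a single vertex and there is nothing to prove, so I would assume from now on that the size is at least $2$.

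Next I would examine what $w$ can do to a single edge. Since $w$ fixes each edge $\{a,b\}$ setwise, on every edge it either fixes both endpoints or transposes them. The key local observation is that $w$ cannot transpose the endpoints of an edge $\{a,b\}$ as soon as one of them, say $a$, has degree at least $2$: if $w(a)=b$ and $c$ is a neighbour of $a$ with $c\neq b$, then $w$ must also fix the edge $\{a,c\}$ setwise, forcing $\{b,w(c)\}=\{a,c\}$; but $b\notin\{a,c\}$, since $b\neq a$ and $b\neq c$, a contradiction. Hence whenever an edge has an endpoint of degree at least $2$, the map $w$ fixes both of its endpoints.

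Finally I would invoke connectivity together with the hypothesis that the size differs from $1$. In a connected graph of size at least $2$, no edge can have both endpoints of degree $1$: such an edge would be a connected component on its own, forcing $G$ to be that single edge and hence of size $1$. Therefore every edge of $G$ has at least one endpoint of degree at least $2$, so by the previous step $w$ fixes both endpoints of every edge. Since $G$ is connected and has an edge, every vertex is an endpoint of some edge, whence $w$ fixes every vertex and is the identity, giving $u=v$.

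I expect the main obstacle to be pinning down precisely where the hypothesis that the size differs from $1$ enters, and arguing cleanly that it is exactly the single-edge situation that must be excluded: when $G$ is a lone edge, the transposition of its two endpoints is a nontrivial automorphism inducing the identity on the one-element edge set, so the conclusion genuinely fails there. The remainder is a routine propagation of the local ``fix versus swap'' dichotomy across the connected graph.
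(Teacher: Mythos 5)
Your proof is correct and is essentially the paper's argument in different clothing: the heart of both is the same local contradiction, namely that transposing the endpoints of an edge $\{a,b\}$ is incompatible with the edge bijection on any further edge at $a$ or $b$, with connectivity and size at least $2$ guaranteeing that such a further edge exists. The only differences are presentational: you pass to the single automorphism $w=v^{-1}\circ u$ and obtain the extra edge from the degree observation that no edge of a connected graph of size at least $2$ has both endpoints of degree $1$, whereas the paper works directly with $u$ and $v$ and extracts the extra edge from a path joining a third vertex to $a$.
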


\begin{proof} Connected graphs of size $0$ are singletons, so the result holds trivially in this case. Consider a connected graph $G$ of size $n>1$. Let $u,v$ be two isomorphisms $G\to H$ that determine the same bijection between the edges sets of $G$ and $H$. Suppose $u(a)\neq v(a)$ for some vertex $a$. Since the order of $G$ is greater than $1$ and $G$ is connected, there is at least one edge $\{a,b\}$. Since $\{u(a),u(b)\}=\{v(a),v(b)\}$ and $u(a)\neq v(a)$, we must have that $u(a)=v(b)$ and $u(b)=v(a)$. Let $c$ be any other vertex in $G$ (it exists since $n>1$). $G$ is connected, so there is a path connecting $c$ with $a$. This implies that either there is an edge $\{a,d\}$ or an edge $\{b,d\}$ where $d\neq a$ and $d\neq b$. Suppose there is such edge $\{a,d\}$. Since $u(a)\neq v(a)$ and $u=v$ on edges, we must have $u(d)=v(a)=u(b)$. Since $u$ is an isomorphism, this is a contradiction. Existence of an edge $\{b,d\}$ leads to a similar contradiction.
\end{proof}

  By Lemma~\ref{LemA}, for a given connected partial symmetry $(U,V,f)$ in any graph, $f$ is uniquely determined by where $f$ maps the edges of $U$. When $U$ has at least one edge, it is itself uniquely determined by its edge set. So every partial symmetry $(U,V,f)$ of the third type is uniquely determined by a non-singleton permutation of the edge set. Now, in the star $\mathsf{K}_{1,n}$, any non-singleton permutation of the edge set defines a connected partial symmetry of the third type. Together with the observations we made about singleton and edge symmetries before the lemma, this shows that a connected graph $G$ of size $n$ cannot have more connected partial symmetries than $\mathsf{K}_{1,n}$. 

  Suppose now $G$ is a connected graph of size $n$ such that $\copsi(G)=\copsi(\mathsf{K}_{1,n})$. Then $G$ cannot have order less than $n+1$, since otherwise it would lose out on some of the singleton symmetries. It cannot have order greater than $n+1$ either, since then it would have larger size: it is a well known result in graph theory that a connected graph of order $m$ has at least $m-1$ many edges. So $G$ has exactly $n+1$ vertices. This makes $G$ a tree, by another well known result that a connected graph of size $n$ and order $n+1$ does not have cycles.
  
  Now, if every vertex in $G$ has degree less than $3$, then $G$ is a path. If $n\leqslant 2$ then $G\approx \mathsf{K}_{1,n}$. If $n>2$ then $G$ contains as an induced subgraph a path $U$ of length $3$. This means that $G$ would loose out on partial symmetries of the third type, since not every permutation of the edge set of $U$ will be a partial symmetry (if the middle edge in the path is fixed, the outer edges cannot be swapped). So if every vertex has degree less than $3$, then $G\approx \mathsf{K}_{1,n}$. 
  
  Consider now the case when there is at least one vertex whose degree is $3$ or more. Consider all edges on which this vertex lies. If the other vertex on each edge has degree $1$, then, since $G$ is connected, it cannot have any other vertices and hence $G\approx \mathsf{K}_{1,n}$. Suppose there is at least one edge where the other vertex has degree greater than $1$. Then, since $G$ is a tree, it will contain as an induced subgraph a path $U$ of length $3$. As before, in this case $G$ would loose out on partial symmetries of the third type. So in this case too, $G\approx \mathsf{K}_{1,n}$.

  The proof of Theorem~\ref{ThmC} is now complete.

\section*{Additional Remarks}

From the proof of Theorem~\ref{ThmC}, we can extract a formula for $\copsi(\mathsf{K}_{1,n})$, by counting partial symmetries of each of the types considered there:
\begin{align*}
\copsi(\mathsf{K}_{1,n}) &=(n+1)^2+2n^2+\sum_{i=2}^n \binom{n}{i}^2 i!\\
&=2n^2+2n+\binom{n}{0}^2 0!+\binom{n}{1}^2 1!+\sum_{i=2}^n \binom{n}{i}^2 i!\\
&=2n(n+1)+\sum_{i=0}^n \binom{n}{i}^2 i!
\end{align*}
The second summand in the last line is in fact the number of partial symmetries of a connected graph of order $n$, i.e., the \emph{partial symmetry index} of $\mathsf{K}_n$. So
$$\copsi(\mathsf{K}_{1,n})=2n(n+1)+\psin(\mathsf{K}_{n}).$$
While the sequence $\copsi(\mathsf{K}_{1,n})$ itself does not appear in OEIS, the two terms above do: the first one is A046092 and the second one is A002720. 

Note that we could also rewrite the formula above as
$$\copsi(\mathsf{K}_{1,n})=n^2+(n+1)^2+\copsi(\mathsf{K}_{n}),$$
since every partial symmetry of $\mathsf{K}_{n}$ except the empty one is connected.

A path $\mathsf{P}_{n}$ of order $n\geqslant 1$ has only two automorphisms (identity and flip). This makes it easy to obtain the connected partial symmetry index of $\mathsf{P}_{n}$:
\begin{align*}
\copsi(\mathsf{P}_n)
&=n^2+\sum_{i=1}^{n-1} 2(n-i)^2\\
&=n^2+2\sum_{i=1}^{n-1} i^2\\
&=n^2+2\frac{n(n-1)(2n-1)}{6}\\
&=\frac{3n^2+n(n-1)(2n-1)}{3}\\
&=\frac{(3n+(n-1)(2n-1))n}{3}\\
&=\frac{(2n^2+1)n}{3}
\end{align*}
These numbers are best known as the octahedral numbers: the number of spheres in an octahedron formed from close-packed spheres. The sequence appears in OEIS as A212133, but our interpretation does not show up there. We can also easily establish the connected partial symmetry index of a cycle of order $n$, where $n\geqslant 3$:
\begin{align*}
\copsi(\mathsf{C}_n) &= n^2+2(n-2)n^2+2n\\
&=n(2n^2-3n+2)
\end{align*}
This sequence is much less known, although it still appears in OEIS, as A212133. Once again, our interpretation is not given. This is an indication that the present paper may be the first one on connected partial symmetry indices.  

In fact, there is not much literature on partial symmetry indices of particular graphs either, despite of the fact that there is literature on partial symmetries of graphs (see, e.g., \cite{ChiPle17, JJSS20, JL}).

\section{Acknowledgement}

This paper grew out from the Honours Project of the third author at Stellenbosch University, completed under the supervision of the first author in 2020.

\end{document}